\newcommand{\weak}{\overset{*}{\rightharpoonup}}
\newcommand{\ped}{\beta,\delta,\varepsilon,x_0}
\DeclareMathOperator{\id}{id}
\newcommand{\R}{\mathbb{R}}
\newcommand{\Z}{\mathbb{Z}}
\newcommand{\N}{\mathbb{N}}
\DeclareMathOperator{\inttt}{int}
\DeclareMathOperator{\cof}{cof}
\DeclareMathOperator{\rank}{rank}
\DeclareMathOperator{\dx}{dx}
\DeclareMathOperator{\dv}{div}
\DeclareMathOperator{\spt}{spt}
\DeclareMathOperator*{\clos}{clos}
\DeclareMathOperator*{\intt}{int}
\DeclareMathOperator{\loc}{loc}
\DeclareMathOperator{\Sym}{Sym}
\DeclareMathOperator{\diag}{diag}
\DeclareMathOperator{\diam}{diam}
\theoremstyle{plain}
\newtheorem*{NTeo}{Theorem}
\newtheorem{Teo}{Theorem}
\newtheorem{lemma}[Teo]{Lemma}
\newtheorem{prop}[Teo]{Proposition}
\theoremstyle{definition}
\newtheorem{Def}[Teo]{Definition}
\theoremstyle{remark}
\newtheorem*{rem}{Remark}
\title{On a Question of D. Serre}
\author{Luigi De Rosa}
\address{L.D.R.: EPFL SB, Station 8, CH-1015 Lausanne, Switzerland}
\email{luigi.derosa@epfl.ch}
\author{Riccardo Tione}
\address{R.T.: Institut f\"ur Mathematik, Universit\"at Z\"urich, Winterthurerstrasse 190, CH-8057 Zurich, Switzerland}
\email{riccardo.tione@math.uzh.ch}
\begin{document}

\maketitle

\begin{abstract}
In this paper we give a negative answer to the question posed in \cite[Open Question 2.1]{SER} about possible gains of integrability of determinants of divergence-free, non-negative definite matrix-fields. We also analyze the case in which the matrix-field is given by the Hessian of a convex function.
\end{abstract}
\par
\medskip\noindent
\textbf{Keywords:} Matrix-fields, determinants, integrability.
\par
\medskip\noindent
{\sc MSC (2010): 26B25, 39B42, 39B62, 49N60.
\par
}
\section{Introduction}

The aim of this note is to answer \cite[Open Question 2.1]{SER}, that we recall here. Let $\Gamma$ be a lattice of $\R^n$ (one can imagine $\Gamma = \Z^n$, i.e. $\R^n / \Gamma = \mathbb{T}^{n }$, the $n$-dimensional torus), and we will denote by $\Sym^+(n)$ the space of $n\times n$ symmetric non-negative definite matrices.
\newline
\newline
\textbf{Open Question 2.1:} Let $x \mapsto A(x)$ be $\Gamma$-periodic, taking values in $\Sym^+(n)$. Let $A$ and $\dv(A)$ belong to $L^p(\R^n/\Gamma)$ with $1 < p < n$. Defining $\frac{1}{p'}=\frac{1}{p}-\frac{1}{n}$, is it true that $$\det(A)^{\frac{1}{n}} \in L^{p'}(\R^n / \Gamma) \,?$$
\newline
\noindent The answer to the question is negative, and our proof involves the construction of a family of "approximate counterexamples" in Lemma \ref{constr}, and then an application of Baire's category Theorem in Theorem \ref{Baire} to find actual counterexamples that are also topologically typical (in the sense of Baire). Even though in our case the situation is quite simple since our family of starting approximate counterexample is explicit, notice that these two steps are common to all the so-called convex integration schemes (compare, for instance, \cite[Proposition 4.17]{KIRK}). 
\\
\\
D. Serre's question was motivated by his results of \cite{SER}, that consisted in showing higher integrability of the determinant of $\Sym^+(n)$-valued fields $A$ when $\dv(A) = 0$ (or $\dv(A)$ is a bounded measure), and $A \in L^1$. We recall here \cite[Theorem 2.1]{SER}, but see also \cite[Theorem 2.2, 2.3, 2.4]{SER} for analogous results:

\begin{NTeo} Let the divergence-free, non-negative definite matrix field $x \mapsto A(x)$ be $\Gamma$-periodic, with $A \in L^1(\R^n/ \Gamma)$. Then
\[
\det(A) \in L^{\frac{1}{n -1}}(\R^n/\Gamma)
\]
and there holds
\[
\fint_{\R^n/\Gamma}\det(A(x))^{\frac{1}{n - 1}}\dx \le \det\left(\fint_{\R^n/\Gamma} A(x)\dx\right)^{\frac{1}{n - 1}}
\]
\end{NTeo}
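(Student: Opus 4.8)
The plan is to prove the displayed inequality; the integrability $\det(A)\in L^{1/(n-1)}(\R^n/\Gamma)$ is then automatic, since the right‑hand side is finite ($A\in L^1$ makes $\fint A$ a well‑defined matrix). I would argue in three stages.

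\emph{Reductions.} First I would mollify: $A_\delta:=A*\rho_\delta$ is smooth, $\Gamma$‑periodic, divergence‑free and $\Sym^+(n)$‑valued — positive semidefiniteness being a convex constraint, it is preserved under convolution — and $\fint A_\delta=\fint A$. Since $A_\delta\to A$ in $L^1$, one has $\det(A_\delta)^{1/(n-1)}\to\det(A)^{1/(n-1)}$ a.e.\ along a subsequence, so by Fatou it suffices to prove the inequality for each $A_\delta$. Next, replacing $A$ by $A+\varepsilon\Id$ (still $\Gamma$‑periodic and divergence‑free, now strictly positive) and using $\det(A)^{1/(n-1)}\le\det(A+\varepsilon\Id)^{1/(n-1)}$ together with $\fint(A+\varepsilon\Id)=\fint A+\varepsilon\Id$, one reduces to $A$ smooth and pointwise positive definite. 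Finally, the substitution $\tilde A(Tx):=|\det T|^{-1}\,TA(x)T^{T}$, $T\in GL(n)$, preserves divergence‑freeness, positivity and ($T\Gamma$‑)periodicity and transforms both sides compatibly, so choosing $T$ with $T(\fint A)T^{T}$ a multiple of the identity I may assume $\fint A=\Id$, reducing the goal to $\fint_{\R^n/\Gamma}\det(A)^{1/(n-1)}\le 1$.

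\emph{The algebraic core.} The function $\Phi(M):=\det(M)^{1/(n-1)}$ on $\Sym^+(n)$ is \emph{not} concave once $n\ge 2$ — already $(\lambda_1\cdots\lambda_n)^{1/(n-1)}$ fails to be concave on the diagonal — so $\dv A=0$ must be used essentially. The structural fact I would isolate is: if $S\in\Sym(n)$ has $\det S=0$ and $M,M+tS\in\Sym^+(n)$, then $t\mapsto\Phi(M+tS)$ is concave. Conjugating by $M^{-1/2}$, this reduces to showing $t\mapsto\det(\Id+tS')^{1/(n-1)}=\bigl(\prod_{i=1}^{n-1}(1+t\mu_i)\bigr)^{1/(n-1)}$ is concave, where $\mu_1,\dots,\mu_{n-1},0$ are the eigenvalues of the singular symmetric matrix $S'$; this is a geometric mean of $n-1$ positive affine functions of $t$, hence concave. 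The symmetric matrices $S$ with $\det S=0$ are precisely the admissible Fourier symbols $\widehat A(\xi)$ of divergence‑free symmetric fields — the constraint being $\widehat A(\xi)\xi=0$ — so they span the wave cone of $\dv$, and the above says $\Phi$ is $\Lambda$‑concave for this operator. A consistency check: the infinitesimal form of the theorem is the Fourier inequality $\fint|\tr S|^2\le(n-1)\fint|S|^2$ for divergence‑free mean‑zero symmetric $S$, which holds because each $\widehat S(\xi)$ is supported on the hyperplane $\xi^{\perp}$, where Cauchy--Schwarz gives $|\tr\widehat S(\xi)|^2\le(n-1)|\widehat S(\xi)|^2$.

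\emph{From $\Lambda$‑concavity to the integral bound.} The hard part, as I see it, is to upgrade this pointwise concavity to the integral inequality $\fint\Phi(A)\le\Phi(\fint A)$, i.e.\ to the $\dv$‑quasiconcavity of $\Phi$. The route I would attempt is to produce, given $\bar A:=\fint A$, a $\Gamma$‑periodic vector field $v$ and the constant matrix $C:=\tfrac1n(\det\bar A)^{(2-n)/(n-1)}\cof(\bar A)$ — chosen so that $\langle C,\bar A\rangle=\Phi(\bar A)$, where $\langle X,Y\rangle:=\tr(X^{T}Y)$ — with the pointwise bound $\Phi(A(x))\le\langle C+Dv(x),\,A(x)\rangle$ for a.e.\ $x$. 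Granting this, $\int\langle Dv,A\rangle=-\int v\cdot\dv A=0$, so $\fint\Phi(A)\le\fint\langle C+Dv,A\rangle=\langle C,\bar A\rangle=\Phi(\bar A)$, as required. Constructing such a \emph{divergence‑compatible subgradient field} $v$ is precisely the content of $\dv$‑quasiconcavity, and I expect this to be the main obstacle: one must use the rank‑deficient concavity above to compensate on the (generically nonempty) set where $\Phi(A(x))>\langle C,A(x)\rangle$, in the spirit of the passage from rank‑one convexity to quasiconvexity for special integrands. As a guide, the case $n=2$ is explicit: writing the two divergence‑free periodic columns of $A$ through stream functions and using symmetry gives $A=\cof(D^{2}w)$ with $w=\tfrac12 x^{T}(\cof\bar A)\,x+\psi$, $\psi$ $\Gamma$‑periodic and $D^{2}w>0$; then $\det A=\det D^{2}w$, and the area formula applied to the monotone map $\nabla w$, which descends to a bijection of tori, gives $\fint\det A=\det(\cof\bar A)=\det\bar A$ — equality, as it must be in the smooth positive case. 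An alternative to this route is induction on $n$ from the case $n=2$, slicing $\R^n/\Gamma$ and applying Hölder to the Schur factorization $\det A=\det(A')\,(A/A')$; the difficulty is then that a principal block $A'$ is not divergence‑free in the sliced variables, which one repairs by first averaging over the complementary direction.
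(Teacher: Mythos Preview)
First, a clarification: the paper does not prove this statement. It is quoted from \cite[Theorem~2.1]{SER} as background motivation, with no argument supplied. So there is no in-paper proof to compare against; what follows is an assessment of your proposal on its own merits.

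Your reductions (mollification, shift by $\varepsilon\Id$, linear conjugation to normalise the average) are sound, your identification of the algebraic core --- that $M\mapsto\det(M)^{1/(n-1)}$ is concave along directions $S\in\Sym(n)$ with $\det S=0$, i.e.\ along the wave cone of the divergence operator --- is correct and is indeed Serre's key structural observation, and your $n=2$ argument via stream functions, convexity of the potential $w$, and the area formula for $\nabla w$ is complete and even recovers the equality case. The genuine gap is Stage~3 for $n\ge 3$. You flag it yourself: passing from $\Lambda$-concavity to the integral inequality $\fint\Phi(A)\le\Phi(\fint A)$ is precisely the substance of the theorem, and you do not carry it out. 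No general principle produces the periodic ``subgradient field'' $v$ you postulate; the analogy with the rank-one/quasiconvexity gap that you invoke actually cuts the wrong way, since that implication is known to fail in general, so one cannot hope to get the result from $\Lambda$-concavity by soft arguments. Your alternative slicing/induction route is equally incomplete: the difficulty you note --- loss of divergence-freeness of the principal block in the sliced variables --- is real, and ``repairing by first averaging over the complementary direction'' does not obviously preserve the positivity and the inductive hypothesis simultaneously. In short, for $n\ge 3$ the proposal correctly isolates the crux but does not resolve it; what is missing is not a technical detail but the heart of Serre's argument.
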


The idea behind Serre's result, that can be read in \cite[Proposition 1.2]{SER} and the discussion before and after the proposition, is that if one couples a PDE constraint (such as $\dv(A) = 0$), with some other constraint (for instance, $A \in \Sym^+(n)$), it is possible to show some "elliptic property" of the solutions, i.e. some improvement in properties of the solutions. In the particular case when the operator is the divergence, one can show that, on matrices with maximal rank, $A \mapsto \det(A)$ is "elliptic" (for instance, see \cite[Proposition 1.2]{SER}). In the recent papers \cite{DIM,GUIANN}, it is shown that many problems, such as Alberti's Rank One Theorem \cite{ALB}, or the recent extension of Allard's rectifiability result \cite{DDG}, can be solved by similar considerations. These ideas have their root in the classical theory of compensated compactness of F. Murat and L. Tartar \cite{MUR1,MUR2,TAR1,TAR2}.
\\
\\
Let us outline the structure of this paper and the strategy of the proof. In the first section, we give the answer to Serre's question. In Lemma \ref{constr}, we construct a family of convex functions, that we then use in Theorem \ref{Baire} to construct the required counterexample. The key observation here is that the cofactor matrix of a (sufficiently regular) gradient is always divergence-free, as proved, for instance, in \cite[Ch. 8, Th. 2]{EVAPDE}. In the second and final section, we use again the family of Lemma \ref{constr} to discuss the case in which the matrix is the Hessian of a convex function. In particular, we show also in this case that in general one does not have any gain of integrability of the determinant in both Lebesgue $L^p$ and Hardy $\mathcal{H}^1$ spaces.

\subsection{Notation and preliminary results}

We will always use the euclidean norm on matrices $A \in \R^{n\times m}$ and vectors $v \in \R^n$, denoted with $\|A\|$ and $\|v\|$. For any matrix $A \in \R^{n\times n}$, we denote with $\cof(A)$ the matrix obtained as
\[
\cof(A)_{ij} = (-1)^{i + j}\det(A^{ji}),
\]
where $A^{ji}$ is the $(n-1)\times(n -1)$ matrix obtained from $A$ by eliminating the $j$-th row and the $i$-th column. In particular
\[
A\cof(A) = \det(A)\id_n, \quad\forall A \in \R^{n\times n}.
\]
For functions $f: \R^n \to \R$, we denote with $\nabla f$ its gradient, and with $Hf$ its Hessian, i.e. the matrix of the second derivatives of $f$. With $\det(Hf)$ we always denote the "pointwise" determinant, i.e. the determinant of the classical Hessian matrix (we recall that the Hessian matrix of a convex function is defined a.e., as proved in \cite[Theorem 6.9]{EVG}). Finally, we will use the concept of Monge-Amp\`ere measure associated to a convex function. For every convex function $\varphi: \Omega \to \R$, this is defined as the locally finite measure:
\[
\mu_\varphi(E) \doteq \left|\bigcup_{x \in E}\partial \varphi(x)\right|,
\]
where $\partial \varphi(x)$ denotes the subdifferential at $x$ of $\varphi$ and $|A|$ is the Lebesgue measure of the set $A$. We refer the reader to \cite[Section 2]{FIG} for the basic properties of $\mu_\varphi$. The terms "absolutely continuous" and "singular" part of a measure need to be intended with respect to the Lebesgue measure. We denote the singular part of a measure $\mu$ as $\mu^s$. Moreover, for a Borel set $E\subset \R^n$ we use the symbol $\mu\llcorner E$ to denote the measure
\[
\mu\llcorner E(A) \doteq \mu(E\cap A), \quad \forall A \text{ Borel subset of }\R^n.
\]

We will denote by $\mathcal{H}_{\loc}^1(\Omega)$ the local Hardy space. We will just need to consider non-negative functions in this space, and we recall that for a measurable, non-negative function $f:\Omega \rightarrow \R$, $f \in \mathcal{H}_{\loc}^1(\Omega)$ if and only if (see \cite[Lemma 3]{MULDETPOS}, that is a consequence of \cite{STE})
$$
\|f\|_{\mathcal{H}^1(K)}=\int_{K} f(x) \log(1+f(x))\, \dx\, < +\infty,\; \forall K \subset \Omega, K \text{ compact}.
$$
\subsection*{Aknowledgements} We are grateful to Guido De Philippis for his interest and comments on the problem and for reading a preliminary draft of this note.
\section{The counterexample}

\noindent Let  $\Omega$ be an open subset of $\R^n$. Let 
\begin{align*}
Y_{p,K} \doteq \{&A \in L^p(\Omega,\Sym^+(n)): \dv(A) \in L^p(\Omega,\R^n), \\ &A \equiv \bar{A} \text{ outside } K, \text{ for some fixed $\bar A \in \Sym^+(n)$}\}, 
\end{align*}
for any compact $K \subset \Omega$ with $\clos(\inttt(K)) = K \neq \emptyset$. We consider the following distance on $Y_{p,K}$, that turns it into a complete metric space:
\[
d(A,B) \doteq \|A - B\|_{L^p} + \|\dv(A - B)\|_{L^p}.
\]
We prove the following
\begin{Teo}\label{Baire}
Let $p^*\doteq \max\left\{0, \frac{p(n-1) - n}{p(n-1)}\right\}$. The set $$D_{p,K} \doteq \{A \in Y_{p,K}: \det(A)^{\frac{1}{n-1}} \in L^{\frac{1}{1 - p^*}}(\Omega)\setminus L^{\frac{1}{1 - p^*} + \varepsilon}(\Omega), \forall \varepsilon > 0\}$$ is residual in $Y_{p,K}$.
\end{Teo}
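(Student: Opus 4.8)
The plan is a standard Baire category argument: I would present $D_{p,K}$ as a dense $G_\delta$ by writing its complement as a countable union of closed sets with empty interior, the emptiness of interior being supplied by the convex functions produced in Lemma \ref{constr}. The first step is a reduction. I would observe that $\det(A)^{\frac{1}{n-1}}\in L^{\frac{1}{1-p^*}}(\Omega)$ holds for \emph{every} $A\in Y_{p,K}$: when $p\ge\frac{n}{n-1}$ (so that $\frac{1}{1-p^*}=\frac{p(n-1)}{n}$) this is the elementary inequality $\det(A)^{1/n}\le\frac{1}{n}\tr(A)\le|A|$ integrated over $\Omega$; when $p<\frac{n}{n-1}$ (so that $\frac{1}{1-p^*}=1$) it is Serre's theorem in the form valid when $\dv(A)$ is a finite measure (\cite[Theorems~2.1--2.4]{SER}), which is where the divergence constraint is used. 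Hence $A\in Y_{p,K}\setminus D_{p,K}$ if and only if $\det(A)^{\frac{1}{n-1}}\in L^{\frac{1}{1-p^*}+\frac1k}(\Omega)$ for some $k\in\N$, and therefore
\[
Y_{p,K}\setminus D_{p,K}=\bigcup_{k,j\in\N}C_{k,j},\qquad C_{k,j}\doteq\Big\{A\in Y_{p,K}:\big\|\det(A)^{\frac{1}{n-1}}\big\|_{L^{\frac{1}{1-p^*}+\frac1k}(\Omega)}\le j\Big\};
\]
it then suffices to prove that each $C_{k,j}$ is closed and has empty interior in $(Y_{p,K},d)$. Closedness is immediate: if $A_m\to A$ in $d$ then $A_m\to A$ in $L^p$, hence a.e.\ along a subsequence, and since $\det$ is continuous and the matrices lie in $\Sym^+(n)$, Fatou's lemma gives $\|\det(A)^{\frac{1}{n-1}}\|_{L^{\frac{1}{1-p^*}+\frac1k}}\le j$, i.e.\ $A\in C_{k,j}$.

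The core of the proof is to show that $C_{k,j}$ has empty interior, and this is where Lemma \ref{constr} enters. Given $A\in C_{k,j}$ and $\rho>0$, I would use Lemma \ref{constr}, after translating and rescaling so that the resulting convex function $f$ is affine outside a ball contained in $\inttt(K)$, to produce the perturbation $A_0\doteq\cof(Hf)$, which enjoys: (i) $A_0\in\Sym^+(n)$ a.e.\ (the cofactor of a non-negative definite matrix is non-negative definite) and $A_0\equiv 0$ outside $\inttt(K)$; (ii) $A_0\in L^p(\Omega)$ with $\|A_0\|_{L^p}<\rho$, and $\dv(A_0)=0$ since $\nabla f$ is regular enough for the cofactor of its gradient to be divergence-free (as recalled in the introduction); (iii) $\det(A_0)^{\frac{1}{n-1}}=\det(\cof(Hf))^{\frac{1}{n-1}}=\det(Hf)\notin L^{\frac{1}{1-p^*}+\frac1k}(\Omega)$. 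Then $B\doteq A+A_0$ takes values in $\Sym^+(n)$, coincides with $\bar A$ outside $K$, and satisfies $\dv(B)=\dv(A)\in L^p$, so $B\in Y_{p,K}$ with $d(A,B)=\|A_0\|_{L^p}<\rho$. On the other hand, the Minkowski determinant inequality for non-negative definite matrices yields $\det(B)^{1/n}\ge\det(A_0)^{1/n}$ pointwise, hence $\det(B)^{\frac{1}{n-1}}\ge\det(A_0)^{\frac{1}{n-1}}$ pointwise, so $\|\det(B)^{\frac{1}{n-1}}\|_{L^{\frac{1}{1-p^*}+\frac1k}}=+\infty>j$ and $B\notin C_{k,j}$. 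Since $\rho$ was arbitrary, $A$ is not an interior point of $C_{k,j}$.

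Combining the two steps, $Y_{p,K}\setminus D_{p,K}$ is a countable union of nowhere dense sets; since $(Y_{p,K},d)$ is complete, Baire's theorem shows that $D_{p,K}$ is residual in $Y_{p,K}$. The step I expect to be delicate is the ``empty interior'' one, and two features make it work: perturbations of the form $\cof(Hf)$ are automatically admissible in $Y_{p,K}$ (non-negative definite, divergence-free, localized inside $K$), and the Minkowski determinant inequality forces $\det(A+A_0)^{\frac{1}{n-1}}$ to inherit the non-integrability of $\det(A_0)^{\frac{1}{n-1}}$ \emph{regardless of $A$}, so a single family of perturbations works uniformly over $C_{k,j}$. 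The genuine work is of course hidden in Lemma \ref{constr} itself: constructing a convex $f$, affine outside a small ball, for which $\cof(Hf)$ has arbitrarily small $L^p$-norm while $\det(Hf)$ has exactly critical --- hence non-improvable --- integrability.
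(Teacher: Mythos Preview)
Your proof is correct and follows the same route as the paper: write $Y_{p,K}\setminus D_{p,K}=\bigcup_{k,j}C_{k,j}$, use Fatou for closedness, and destroy any potential interior point by adding the perturbation $\cof(H\varphi)$ from Lemma~\ref{constr}, with the Minkowski determinant inequality forcing $\det(B)^{\frac{1}{n-1}}$ to inherit the non-integrability of the perturbation. One small slip: Lemma~\ref{constr} makes $\varphi$ \emph{quadratic} (not affine) outside the ball, so $A_0=\cof(H\varphi)$ is a nonzero constant there rather than $0$ --- harmless for the argument, and handled identically in the paper.
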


\begin{Def}
$p^* \doteq \max\left\{0, \frac{p(n-1) - n}{p(n-1)}\right\}$ is the \emph{critical exponent}.
\end{Def}

\begin{rem} The same result (without modifying the proof) would have held if we had required $\dv(A) = 0$ in the definition of $Y_{p,K}$, or if we had chosen instead of $Y_{p,K}$,
\begin{align*}
X_p = \{&A \in L^p(\Omega,\Sym^+(n)): \dv(A) \in L^p, \\
&A\nu \equiv \bar{A}\nu \text{ on }\partial\Omega, \text{ for some fixed $\bar A \in \Sym^+(n)$}\},
\end{align*}
or, as in Serre's original question
\[
S_p = \{A \in L^p(\R^n / \Gamma, \Sym^+(\R^n)): \dv(A) \in L^p(\R^n / \Gamma, \Sym^+(\R^n))\},
\]
\end{rem}
\begin{rem}
Let us explain how this result gives negative answer to \cite[Open Question 2.1]{SER}. If $p \le \frac{n}{n -1}$, then $p^* = 1$, and we obtain the existence of one (in fact, many) divergence free, non-negative definite tensor fields $A$ such that
\[
\det(A)^{\frac{1}{n - 1}} \in L^1\setminus L^{1 +\varepsilon}, \quad \forall \varepsilon >0, 
\]
thus proving the optimality of Serre's results. Moreover, also in the supercritical case, i.e. $p > \frac{n}{n - 1}$ that yields
\[
\frac{1}{1-p^*} = p\frac{n}{n - 1},
\]
 
\noindent Theorem \ref{Baire} tells us that for many divergence free, non-negative definite $A$, $\det^{\frac{1}{n - 1}}(A) \in L^{\frac{p(n - 1)}{n}} \setminus L^{\frac{p(n - 1)}{n}+\varepsilon} $, thus proving that there can be no general gain in the integrability of the determinant with respect to the general estimate $\det(A) \in L^{\frac{p}{n}}$. 
\end{rem}

\begin{lemma}\label{constr}
Fix $p \ge 1$. For every $\beta >0,\delta> 0,\varepsilon > 0,  x_0 \in \Omega$ there exists a convex function $\varphi_{\beta,\delta,\varepsilon,x_0} \in W_{\loc}^{2,p(n-1)}(\Omega)$ and a matrix $S_{\beta,\delta,\varepsilon,x_0} \in \Sym^+(n)$ such that
\begin{enumerate}[(i)]
\item $\varphi_{\beta,\delta,\varepsilon,x_0} \equiv x^TS_{\beta,\delta,\varepsilon,x_0}x$ outside $B_\beta(x_0)$\label{spt};
\item $\|\cof(H\varphi_{\ped})\|_{L^p(\Omega)} \le  \delta$\label{piccolezza};
\item $\det^{\frac{1}{n - 1}}(\cof(H\varphi_{\ped})) \notin L^{\frac{1}{1 - p^*} + \varepsilon}(B_r(x_0)), \forall r > 0$.
\label{esplosione}
\end{enumerate}
\end{lemma}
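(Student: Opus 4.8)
The plan is to build $\varphi_{\ped}$ explicitly as a radial convex function centered at $x_0$, interpolating between a quadratic near $x_0$ and a fixed quadratic $x^T S x$ outside $B_\beta(x_0)$, whose Hessian has a prescribed mild singularity at $x_0$. Without loss of generality take $x_0 = 0$. I would look for $\varphi$ of the form $\varphi(x) = g(|x|)$ on a small ball, where $g$ is increasing and convex, and the radial profile is chosen so that $H\varphi$ has one ``radial'' eigenvalue $g''(|x|)$ and $(n-1)$ ``angular'' eigenvalues $g'(|x|)/|x|$. For such a function $\cof(H\varphi)$ has the radial entry equal to $(g'(r)/r)^{n-1}$ and the angular entries equal to $g''(r)(g'(r)/r)^{n-2}$, and $\det(\cof(H\varphi))^{1/(n-1)} = \det(H\varphi) = g''(r)(g'(r)/r)^{n-1}$. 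So the whole computation reduces to one-variable calculus on the profile $g$.

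\textbf{Choice of the profile.} I want $\det^{1/(n-1)}(\cof(H\varphi)) = g''(r)(g'(r)/r)^{n-1}$ to fail to be in $L^{q+\varepsilon}$ near $r=0$, where $q = \frac{1}{1-p^*}$; recall $q = \max\{1, \frac{p(n-1)}{n}\}$. At the same time $\cof(H\varphi)$, whose entries are comparable to $(g'(r)/r)^{n-1}$ and $g''(r)(g'(r)/r)^{n-2}$, must have small $L^p$ norm, and $\varphi$ must lie in $W^{2,p(n-1)}_{\loc}$, i.e. $g''$ and $g'/r$ must be in $L^{p(n-1)}(r^{n-1}\,dr)$ near $0$. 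The natural ansatz is a power $g'(r) = r^{\alpha}$ near $0$ for a suitable exponent $\alpha \in (0,1)$ (so $g$ is convex and $g'/r = r^{\alpha-1}$ blows up), giving $g''(r)(g'(r)/r)^{n-1} \sim r^{(\alpha-1)n}$; one then picks $\alpha$ close enough to a threshold value that $r^{(\alpha-1)n}$ is just barely in $L^q(r^{n-1}dr)$ but not in $L^{q+\varepsilon}$, while simultaneously keeping $\alpha$ large enough that the $W^{2,p(n-1)}$ and $L^p$ integrability of the relevant quantities holds. A short computation with these powers pins down the admissible range of $\alpha$, and the borderline exponent is exactly the one dictated by the definition of $p^*$; if logarithmic corrections are needed to sit precisely on the boundary of $L^q$ one inserts a factor like $(\log(1/r))^{-\gamma}$, but I expect a pure power (with $\alpha$ chosen depending on $\varepsilon$) to suffice since we only need failure of $L^{q+\varepsilon}$, not membership in $L^q$ exactly.

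\textbf{Gluing and smallness.} Having fixed the behaviour near $0$, I extend $g$ on $[0,\beta]$ so that $\varphi(x) = g(|x|)$ is $C^1$ and convex and matches $x^T S x$ on $\partial B_\beta$ for an appropriate $S \in \Sym^+(n)$ (take $S$ a multiple of the identity, $S = c\,\Id$, so matching $g'(\beta) = 2c\beta$ and the value determine $c$; convexity of the glued profile is arranged by monotone matching of $g'$). Item \eqref{spt} is then immediate. For item \eqref{piccolezza}: the entries of $\cof(H\varphi)$ on $B_\beta$ are controlled by powers of $r$ that are $L^p$-integrable by the choice of $\alpha$, and the contribution on $B_\beta \setminus (\text{small ball})$ plus the gluing region can be made as small as we like by first shrinking $\beta$ and rescaling — more precisely, I would introduce the singular power profile on a ball of radius $\rho \ll \beta$ and observe that $\|\cof(H\varphi)\|_{L^p(B_\rho)} \to 0$ as $\rho \to 0$ (the integrand being $L^p$), while on the annulus $B_\beta\setminus B_\rho$ we use a bounded smooth convex interpolation whose cofactor matrix is small in $L^\infty$ if the total ``rise'' of $g$ there is small; scaling $g$ by a small constant, which does not affect the qualitative blow-up in \eqref{esplosione}, then yields \eqref{piccolezza}. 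Item \eqref{esplosione} follows because near $0$ we have $\det^{1/(n-1)}(\cof(H\varphi)) \sim r^{(\alpha-1)n} \notin L^{q+\varepsilon}(B_r(0))$ for every $r>0$ by the choice of $\alpha$.

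\textbf{Main obstacle.} The delicate point is the simultaneous bookkeeping of exponents: one needs a single exponent $\alpha$ (depending on $p$, $n$, $\varepsilon$) for which $r^{(\alpha-1)n}$ lies outside $L^{q+\varepsilon}(r^{n-1}dr)$ near $0$ \emph{and} $r^{(\alpha-1)(n-1)}$, $r^{(\alpha-1)}$ and the mixed terms all lie in $L^{p}(r^{n-1}dr)$ and in $L^{p(n-1)}(r^{n-1}dr)$. Verifying that the definition $p^* = \max\{0, \frac{p(n-1)-n}{p(n-1)}\}$ is precisely what makes this window of admissible $\alpha$ nonempty — and separately handling the subcritical case $p \le n/(n-1)$, where $p^* = 0$, $q=1$, and one only needs $\det^{1/(n-1)}(\cof(H\varphi)) \notin L^{1+\varepsilon}$ — is the crux of the argument; everything else (convexity, the cofactor formula for radial functions, gluing) is routine one-variable calculus.
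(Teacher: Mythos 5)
Your proposal is correct and follows essentially the same route as the paper: you use the same one-parameter radial power profile ($g'(r)\sim r^\alpha$, i.e. $g(r)\sim r^{1+\alpha}$, so $\det(H\varphi)\sim r^{n(\alpha-1)}$), the same exponent bookkeeping that pins down the admissibility window $\alpha>p^*$ and selects $\alpha$ from $\varepsilon$ via $\tfrac{1}{1-\alpha}=\tfrac{1}{1-p^*}+\varepsilon$, and the same scaling-by-a-small-constant device to get \eqref{piccolezza} without affecting \eqref{esplosione}. The paper's presentation is a bit more streamlined (it fixes $f_\alpha$ once and for all on $\R^n$ with a quadratic outside $B_1$, then rescales and translates, rather than your two-scale gluing $\rho\ll\beta$), and it isolates as a separate lemma the verification that the pointwise Hessian of $f_\alpha$ coincides with the distributional one, a point your sketch takes for granted.
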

\begin{proof} We divide the proof in four steps:
\newline
\newline
\fbox{Step 1: Definition and properties of the starting function.}
\newline
\newline
\indent For $\alpha \ge 0$, define the function
\[
f_\alpha(x)\doteq
\begin{cases}
\|x\|^{1 + \alpha} + b, &\text{ if } \|x\| \le 1,\\
a\|x\|^2, &\text{ if } \|x\| > 1,
\end{cases}
\]
where $a,b \in \R$ are chosen in such a way that the function $f_\alpha$ is in $C^1(\R^n\setminus\{0\})$, i.e. we need to solve
\[
1 + b = a \text{ and } 1 + \alpha= 2a.
\]
Therefore
\begin{equation}\label{func}
f_\alpha(x)\doteq
\begin{cases}
\|x\|^{1 + \alpha} +\frac{ \alpha - 1}{2}, &\text{ if } \|x\| \le 1,\\
\frac{1+\alpha}{2}\|x\|^2, &\text{ if } \|x\| > 1.
\end{cases}
\end{equation}
It is easy to see that $f_\alpha$ defined in this way is convex. We compute its pointwise Hessian (except for the points $x \in \R^n$ such that $\|x\| = 0$ or $\|x\| = 1$):

\begin{equation}\label{hess}
Hf_\alpha(x)\doteq
\begin{cases}
(1 + \alpha)\left(\|x\|^{\alpha - 1}\id_n + (\alpha - 1)\|x\|^{\alpha -3}x\otimes x\right), &\text{ if } 0 < \|x\| < 1,\\
(1+\alpha) \id_n, &\text{ if } \|x\| > 1.
\end{cases}
\end{equation}
\newline
\newline
\indent \fbox{Step 2: $L^p$ estimates on $Hf_\alpha$.}
\newline
\newline
\indent We can estimate, for some constant $C_{\alpha,n} > 0$,
\begin{equation}\label{est1}
\|Hf_\alpha\|(x) \le
\begin{cases}
C_{\alpha,n}\|x\|^{\alpha - 1}, &\text{ if } 0 <\|x\| < 1,\\
(1+\alpha)\sqrt{n}, &\text{ if } \|x\| > 1.
\end{cases}
\end{equation}
Moreover, recalling the following linear algebra result, often called Matrix Determinant Lemma
\[
\det(A + B) = \det(A) + \langle B,\cof^T(A)\rangle,\quad \forall A,B \in \R^{n\times n},\; \rank(B) = 1,
\]
we compute:
\begin{equation}\label{est2}
\det(Hf_\alpha)(x) =
\begin{cases}
\alpha(1 + \alpha)^n\|x\|^{n(\alpha -1)}, &\text{ if } 0 <\|x\| < 1,\\
(1+\alpha)^n, &\text{ if } \|x\| > 1.
\end{cases}
\end{equation}
From \eqref{est1}, we find that $Hf_\alpha \in L_{\loc}^p(\R^n)$ for every $\alpha \ge 0$ if $p < n$ and for $\alpha > \frac{p - n}{p}$ if $p \ge n$. For these values of $\alpha$, we also get $f_\alpha \in W^{2,p}_{\loc}(\R^n)$, as proved in Lemma \ref{Sobolev}, and that $Hf_\alpha$ is not only the pointwise Hessian of $f_\alpha$ but also its distributional Hessian.
\newline
\newline
\fbox{Step 3: Integrability of the determinant and the cofactors of $Hf_\alpha$.}
\newline
\newline
\indent Define
\begin{equation}\label{A}
A_\alpha(x) \doteq \cof(H f_\alpha)(x).
\end{equation}
In view of the equality $\det^{\frac{1}{n - 1}}(A_\alpha) = \det(Hf_\alpha)$ and \eqref{est2},
\begin{equation}\label{expl}
\det(A_\alpha)^{\frac{1}{n - 1}} \in L_{\loc}^{\frac{1}{1 - \alpha} - \varepsilon}(\R^n), \quad \forall \varepsilon > 0,
\end{equation}
but 
\begin{equation}\label{expl}
\det(A_\alpha)^{\frac{1}{n - 1}} \notin L^{\frac{1}{1 - \alpha}}(B_r(0)) \text{ for any } r > 0. 
\end{equation}
Moreover, by \eqref{est1},
\begin{align*}
\|A_\alpha\|(x) &= \|\cof(H f_\alpha)\|(x)\le c_{n}\|H f_\alpha\|^{n - 1}(x)  \overset{\eqref{est1}}{\le} C'_{\alpha,n}\max\{\|x\|^{(n -1)(\alpha - 1)},1\},
\end{align*}
for some constant $C'_{\alpha,n} > 0$. Hence, if $(n -1)(1 - \alpha)p < n$, i.e. if $\alpha > p^*$, then $A_\alpha \in L_{\loc}^p(\R^n)$. The same computation shows, in particular, that for $\alpha > p^*$ one has $f_\alpha \in W_{\loc}^{2,p(n-1)}(\R^n)$.
\newline
\newline
\fbox{Step 4: Construction of $\varphi_{\ped}$.}
\newline
\newline
\indent Fix $p,\beta,\delta, \varepsilon, x_0$ as in the statement of the Lemma. Choose $\alpha = \alpha(\varepsilon) > 0$ such that
\[
 \frac{1}{1 - \alpha} = \frac{1}{1 - p^*}+ \varepsilon,
\]
that in particular implies $\alpha > p^*$. Finally define, for a constant $c_{\beta,\delta,\varepsilon} > 0$ to be fixed later,
\begin{equation}\label{param}
\varphi_{\ped}(x) \doteq c_{\beta,\delta,\varepsilon}\left[f_{\alpha}\left(\frac{2}{\beta} (x - x_0)\right) -2\left(\frac{1 + \alpha}{\beta^2}\right)(\|x_0\|^2 -2(x,x_0))\right].
\end{equation}
By the definition of $f_\alpha$, we get $\eqref{spt}$. Moreover, \eqref{esplosione} is a consequence of our choice of $\alpha$ and \eqref{expl}. Finally, since $\alpha > p^*$, $A_\alpha$ belongs to $L_{\loc}^p(\R^n)$, as proved in the previous step. Therefore, we can choose $c_{\beta,\delta,\varepsilon}$ small enough so that \eqref{piccolezza} is fulfilled.
\end{proof}

\begin{lemma}\label{Sobolev}
The function $f_\alpha$ defined in \eqref{func} is in $W_{\loc}^{2,p}(\R^n)$  for every $\alpha \ge 0$ if $p < n$ and for $\alpha > \frac{p - n}{p}$ if $p \ge n$. Moreover, its pointwise Hessian, computed in \eqref{hess}, coincides with its distributional Hessian.
\end{lemma}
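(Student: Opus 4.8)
The plan is to isolate the origin as the only genuine singular point: away from it $f_\alpha$ is piecewise smooth and globally $C^1$, so there the distributional and pointwise second derivatives agree for elementary reasons, while at the origin one removes the singularity by an integration by parts over $\{\|x\|>\epsilon\}$ followed by the passage to the limit $\epsilon\to0$.

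First I would recall that $f_\alpha$ is convex (as noted right after \eqref{func}), hence locally Lipschitz, so that $f_\alpha\in W^{1,\infty}_{\loc}(\R^n)$ and its weak gradient equals the a.e.\ gradient read off from \eqref{func}; in particular $\nabla f_\alpha(x)=(1+\alpha)\|x\|^{\alpha-1}x$ on $B_1\setminus\{0\}$, so $\|\nabla f_\alpha(x)\|=(1+\alpha)\|x\|^{\alpha}$ there, which stays bounded near $0$. Next, from \eqref{est1} one has $\|Hf_\alpha\|(x)\le C_{\alpha,n}\|x\|^{\alpha-1}$ on $B_1\setminus\{0\}$ and $\|Hf_\alpha\|$ bounded outside $B_1$; integrating $\|x\|^{(\alpha-1)p}$ in polar coordinates, $Hf_\alpha\in L^p_{\loc}(\R^n)$ precisely when $(\alpha-1)p+n>0$, which is automatic for $p<n$ (since $(\alpha-1)p\ge -p>-n$) and is exactly the condition $\alpha>\frac{p-n}{p}$ when $p\ge n$. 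Hence, in all the cases of the statement, $Hf_\alpha\in L^p_{\loc}\subset L^1_{\loc}(\R^n)$, and the only point left to prove is that this $L^1_{\loc}$ field is the distributional Hessian of $f_\alpha$.

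For this, I would fix $\phi\in C^\infty_c(\R^n)$ and indices $i,j$, and use $f_\alpha\in W^{1,1}_{\loc}$ to write $\int f_\alpha\,\partial_{ij}\phi=-\int \partial_i f_\alpha\,\partial_j\phi$. Then I would integrate over $\Omega_\epsilon\doteq\{x:\|x\|>\epsilon,\ |\,\|x\|-1\,|>\epsilon\}$, on each connected component of which $\partial_i f_\alpha$ is $C^1$ up to the boundary, and apply the divergence theorem to the field $\partial_i f_\alpha\,\phi\,e_j$, obtaining
\[
-\int_{\Omega_\epsilon}\partial_i f_\alpha\,\partial_j\phi
=\int_{\Omega_\epsilon}\partial_{ij}f_\alpha\,\phi
-\int_{\partial\Omega_\epsilon}\partial_i f_\alpha\,\phi\,\nu_j\,d\mathcal{H}^{n-1},
\]
with $\nu$ the outer unit normal (the unboundedness of the outer component of $\Omega_\epsilon$ is harmless since $\phi$ has compact support). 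Letting $\epsilon\to0$: the two bulk integrals converge to $-\int\partial_i f_\alpha\,\partial_j\phi$ and $\int\partial_{ij}f_\alpha\,\phi$ by dominated convergence (using $\partial_i f_\alpha\in L^\infty_{\loc}$ and $Hf_\alpha\in L^1_{\loc}$); the boundary term over $\partial B_\epsilon$ is bounded by $(1+\alpha)\epsilon^{\alpha}\|\phi\|_\infty\,\mathcal{H}^{n-1}(\partial B_\epsilon)\le C\epsilon^{\,n-1+\alpha}\to0$ (here $n\ge2$ and $\alpha\ge0$); and the two boundary terms over $\{\|x\|=1-\epsilon\}$ and $\{\|x\|=1+\epsilon\}$ converge to opposite limits that cancel, precisely because $\nabla f_\alpha$ is continuous across $\{\|x\|=1\}$ by the $C^1$-matching that fixed $a,b$. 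This yields $\int f_\alpha\,\partial_{ij}\phi=\int\partial_{ij}f_\alpha\,\phi$ for every $\phi$, i.e.\ the distributional Hessian of $f_\alpha$ is the function in \eqref{hess}; combined with $Hf_\alpha\in L^p_{\loc}$ this gives $f_\alpha\in W^{2,p}_{\loc}(\R^n)$.

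I expect the only delicate point to be the limit of the boundary integrals: on the small sphere $\partial B_\epsilon$ one must use the sharp bound $\|\nabla f_\alpha(x)\|\lesssim\|x\|^{\alpha}$ rather than the cruder Hessian estimate, and near $\{\|x\|=1\}$ one must invoke the continuity of $\nabla f_\alpha$ to cancel the two surface terms — both being exactly the reasons $a,b$ were chosen as in \eqref{func}. Everything else is routine, and the argument applies verbatim with a bounded open set $\Omega$ in place of $\R^n$.
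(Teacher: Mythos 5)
Your proof is correct and follows essentially the same route as the paper: excise a small ball around the origin and a thin shell around $\{\|x\|=1\}$, integrate by parts, and show the boundary contributions vanish (by cancellation across $\{\|x\|=1\}$ thanks to the $C^1$ matching, and by the sharp bound $|\nabla f_\alpha|\lesssim\|x\|^\alpha$ on $\partial B_\epsilon$). The only genuine streamlining you introduce is using convexity to get $f_\alpha\in W^{1,\infty}_{\loc}$ up front, so the first integration by parts is free and the excision argument is only needed once, whereas the paper performs both integrations by parts on the punctured domain and separately disposes of both layers of boundary terms; this is a minor but clean improvement that does not change the substance.
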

\begin{proof}
To see this, we write, for any $\eta \in C_c^\infty(\R^n)$ and $i,j \in \{1,\dots,n\}$,
\begin{align*}
\int_{\R^n}f_\alpha \partial^2_{ij}\eta  = \lim_{R \to 0} \left[ \int_{\R^n\setminus (B_R(0)\cup S_R)}f_\alpha \partial^2_{ij}\eta\right]\,,
\end{align*}
where $S_R=B_{1-R}^c(0) \cap B_{1+R}(0)$. Integrating by parts we get
$$
\int_{\R^n}f_\alpha \partial^2_{ij}\eta  = \lim_{R \to 0}  \left[ \int_{\partial S_R \cup \partial B_R(0)}f_\alpha\, \partial_i \eta\, \nu^j -\int_{\R^n\setminus (B_R(0)\cup S_R)}\partial_j f_\alpha \partial_{i}\eta \right]\,,
$$
and since $f_\alpha \in C^0(\R^n)$ the first term vanishes. Thus we are left with the second one, which again integrating by parts can be written as
$$
\lim_{R \to 0} -\int_{\R^n\setminus (B_R(0)\cup S_R)}\partial_j f_\alpha \partial_{i}\eta = \lim_{R \to 0} \left[ -\int_{\partial S_R \cup \partial B_R(0)}\partial_j f_\alpha\,  \eta\, \nu^i + \int_{\R^n\setminus (B_R(0)\cup S_R)}\eta\, \partial^2_{ij} f_\alpha \right]\,.
$$
Note that for every $\alpha \geq 0$, $\partial_j f_\alpha \in L^\infty_{\loc}(\R^n)$ and $\partial_j f_\alpha  $ is continuous in $\R^n \setminus \{ 0\}$. Thus we have
$$
 \lim_{R \to 0}  \int_{\partial S_R \cup \partial B_R(0)}\partial_j f_\alpha\,  \eta\, \nu^i =\lim_{R \to 0} \left[  \int_{\partial S_R }\partial_j f_\alpha\,  \eta\, \nu^i + \int_{\partial B_R(0) }\partial_j f_\alpha\,  \eta\, \nu^i \right]=0\,.
$$
Finally it is clear that $\partial^2_{ij} f_\alpha$  is in $L_{\loc}^{p}(\R^n)$  for every $\alpha \ge 0$ if $p < n$ and for $\alpha > \frac{p - n}{p}$ if $p \ge n$
Thus, for the ranges of $\alpha$ and $p$ we are considering, we have $H f_\alpha \in L^p_{\loc}(\R^n)$, and by dominated convergence we conclude
$$
\int_{\R^n}f_\alpha \partial^2_{ij}\eta = \lim_{R \to 0} \int_{\R^n\setminus (B_R(0)\cup S_R)}\eta\, \partial^2_{ij} f_\alpha= \int_{\R^n} \eta\, \partial^2_{ij}f_\alpha\,.
$$
\end{proof}

We can finally prove our main result.

\begin{proof}[Proof of Theorem \ref{Baire}]
First observe that
\[
D_{p,K}^c = \{A \in Y_{p,K}: \det(A)^{\frac{1}{n-1}} \in L^{\frac{1}{1 - p^*} + \varepsilon} \text{ for some } \varepsilon > 0\},
\]
which is true for $p < \frac{n}{n -1}$ because of Serre's result \cite[Theorem 2.4]{SER}, while for $p \ge \frac{n}{n -1}$ it is just a consequence of the definition of $p^*$ and the fact that $\det(A)^{\frac{1}{n - 1}}\in L^{\frac{p(n - 1)}{n }}, \forall A \in Y_{p,K}$.\\

\noindent We want to write $D_{p,K}^c$ as a countable union of closed sets with empty interior. To do so, consider
\[
C_{k,j} = \{A \in Y_{p,K}: \|\det(A)^{\frac{1}{n - 1}}\|_{\frac{1}{1 - p^*} + \frac{1}{k}} \le j\}.
\]
For every $k,j$, $C_{k,j}$ is closed in $(Y_{p,K},d)$, as can be easily seen through Fatou's Lemma. Moreover, $$\bigcup_{k,j}C_{k,j} = D_{p,K}^c.$$
Finally, suppose that for some $k,j$, $C_{k,j}$ has non-empty interior. This means that we can find $\bar A \in C_{k,j}$ and a ball (in the $d$-topology on $Y_{p,K}$) of radius $\rho$, $\mathcal{N}_{\rho}(\bar A)$, such that $\mathcal{N}_\rho(\bar A) \subset C_{k,j}$. In particular this implies that 
\begin{equation}\label{contr}
\det(B)^{\frac{1}{n - 1}} \in L^{\frac{1}{1 - p^*} + \frac{1}{k}}(\Omega), \forall B \in \mathcal{N}_{\rho}(\bar A).
\end{equation}
Fix $x_0 \in \intt(K)\subset\Omega$ and let $r > 0$ be such that $B_r(x_0) \subset \intt(K)$. Consider $\varphi_{\beta,\delta,\varepsilon,x_0}$ of Lemma \ref{constr}, with $\varepsilon = \frac{1}{k}$, $\beta = \frac{r}{2}$ and $\delta = \frac{\rho}{2}$. Define also $$\overline{M}_{\beta,\delta,\varepsilon,x_0}(x) \doteq \cof(H\varphi_{\beta,\delta,\varepsilon,x_0}),$$ and finally take
\[
B \doteq \bar A + \overline{M}_{\beta,\delta,\varepsilon,x_0}.
\]
Observe that $\overline{M}_{\beta,\delta,\varepsilon,x_0}$ is a divergence-free non-negative definite tensor field, that is constant outside $K$. The matrix-field $\overline{M}_{\beta,\delta,\varepsilon,x_0}$ is divergence-free is because it is the cofactor matrix of the Hessian of a map $\varphi \in W_{\loc}^{2,p(n - 1)}(\R^n)$. Therefore, our choice of $\beta$ and $\delta$ imply that $B \in \mathcal{N}_\rho(\bar A)$. Hence \eqref{contr} implies $$\det(B)^{\frac{1}{n - 1}} \in L^{\frac{1}{1 - p^*} + \frac{1}{k}}.$$ Since the determinant is monotone on the cone of non-negative symmetric matrices, we have
\[
\det(B) = \det(\bar A +\overline{M}_{\beta,\delta,\varepsilon,x_0}) \ge \det(\overline{M}_{\beta,\delta,\varepsilon,x_0}) \ge 0,
\]
that would imply $\det(\overline{M}_{\beta,\delta,\varepsilon,x_0})^{\frac{1}{n - 1}} \in L^{\frac{1}{1 - p^*} + \frac{1}{k}}(B_\beta(x_0))$ but this contradicts \eqref{esplosione} of Lemma \ref{constr} by our choice of $\varepsilon$.
 \end{proof}

\begin{rem}
We conclude the section by noticing that the situation for diagonal matrices is less rigid. If $A = \diag(f_1,\dots,f_n)$, $f_i \in L^p(\R^n)$, compactly supported, and $\dv(A) \in L^p(\R^n)$, then $\det^{\frac{1}{n-1}}(A) \in L^{p}(\R^n)$, and
\[
\|( \det A)^{\frac{1}{n-1}}\|_{L^{p}} \le C\|\dv(A)\|^{\frac{n}{n-1}}_{L^p}\,,
\]
for some constant $C>0$ which depends on the size of the support of $A$.
Note that one does not even need the non-negativity of $A$ to be satisfied. The proof of the inequality is as follows. We have that $\partial_i f_i \in L^p(\R^n)$. Therefore
\[
|f_i|(x_1,\dots,x_n) = \left|\int_{-\infty}^{x_i}\partial_if_i(x_1,\dots,x_{i - 1},t,x_{i + 1},\dots,x_n)dt\right|
\]
and
\[
|f_i|^p(x_1,\dots,x_n) \le C\int_{-\infty}^{\infty}|\partial_i f_i|^p(x_1,\dots,x_{i - 1},t,x_{i + 1},\dots,x_n)dt,
\]
where $C = C(p,\diam(\spt(A)))$.
Define
\[
g_i(\hat x_i) \doteq \int_{-\infty}^{\infty}|\partial_i f_i|^p(x_1,\dots,x_{i - 1},t,x_{i + 1},\dots,x_n)dt.
\]
We have $g_i \in L^{1}(\R^{n - 1})$, hence $g_i^{\frac{1}{n - 1}} \in L^{n - 1}$. Therefore
\begin{align*}
\int_{\R^n}\det(A)^{\frac{p}{n - 1}}(x)\dx &\le C\int_{\R^n}\prod_i g^{\frac{1}{n - 1}}_i(\hat x_i) \dx \\
&\le C\prod_i\|g_i\|^{\frac{1}{n - 1}}_{L^1(\R^{n - 1})} \le C\|\dv(A)\|^{\frac{np}{(n - 1)}}_{L^p}.
\end{align*}
The second inequality can be found in \cite[Lemma 9.4]{BRE}. Since this inequality is sharp, it is easy to find counterexamples to the statement $\det(A) \in L_{\loc}^{q}(\R^n)$ for exponents $q > \frac{p}{n - 1}$.
\end{rem}

\section{Hessians}

In this section we consider the case of Hessians of convex functions. The analogy with the result of \cite{SER} is that, instead of \emph{divergence-free} tensor fields $A$, here we consider \emph{curl-free} tensor fields $A$. The  \emph{curl-free} assumption, together with the  symmetry of $A$, defines the class of Hessians of functions. Once we also add the non-negativity of the eigenvalues, we are lead to consider exactly Hessians of convex functions. We ask the following question: given $\Omega \subset \R^n$, an open and convex set, let $\varphi \in W_{\loc}^{2,p}(\Omega)$, $p \in [1,+\infty)$, be a convex function. What can be said about the integrability of $\det(H\varphi)$? \\

The case $p = n$ has been covered (in a more general setting) by S. M\"uller in \cite{MULDETPOS,MULDET} (see also \cite{DET} for the same result for mappings with determinants of arbitrary sign). More precisely, it is proved in \cite[Theorem 1]{MULDETPOS} that
$$\det(H\varphi) \in \mathcal{H}_{\loc}^1(\Omega)$$ and moreover that this is optimal in the following sense. In \cite[Counterexample 7.2]{MULDET}, M\"uller finds a sequence of maps $u_j \in W^{1,n}_{\loc}(\R^n)$ such that for every function $\gamma : \R^+ \to \R^+$ such that
\[
\frac{\gamma(z)}{z\log(1 + z)} \to +\infty, \text{ as } z \to \infty,
\]
one has
\[
\|\gamma(\det(\nabla u_j))\|_{L^1(B_1(0))} \to +\infty, \text{ as } j\to \infty.
\]
It is immediate to see that this sequence $u_j$ is actually $u_j = \nabla\varphi_j$, for some convex function $\varphi_j \in W_{\loc}^{2,n}(\R^n)$, hence M\"uller's results close the question in the case $p = n$. Theorem \ref{summ} answers the question in the case $p \in [1, \infty)\setminus\{n\}$. Let us first introduce the following space: for any compact set $K \subset \Omega$, with $\clos(\inttt(K)) = K \neq \emptyset$,
 \begin{align*}
H_{p,K} \doteq \{&\varphi \in W^{2,p}(\Omega): \varphi \text{ is convex, } H\varphi \equiv \bar{A} \text{ outside } K,\\
& \text{ for some fixed $\bar A \in \Sym^+(n)$}\}.
\end{align*}
This is a complete metric space when endowed with the distance
\[
d(\varphi_1,\varphi_2) \doteq \|\varphi_1 - \varphi_2\|_{W^{2,p}(\Omega)}.
\]
\begin{Teo}\label{summ}
The following hold
\begin{enumerate}[(i)]
\item If $p \in [1,n)$, then $\forall \varphi \in W^{2,p}(\Omega)$ convex, $\det(H\varphi) \in L^1_{\loc}(\Omega)$, but there exists a convex function $\bar\varphi \in W^{2,p}(\Omega)$ such that $\det(H\bar\varphi) \in L^{1}_{\loc}(\Omega)\setminus\mathcal{H}_{\loc}^1(\Omega)$;\label{<n}
\item If $p \in (n,+\infty)$, then $\forall \varphi \in W^{2,p}(\Omega)$ convex, $\det(H\varphi) \in L^{\frac{p}{n}}_{\loc}(\Omega)$, but there exists a convex function $\bar\varphi \in W^{2,p}(\Omega)$ such that $\det(H\bar\varphi) \in L^{\frac{p}{n}}_{\loc}(\Omega)\setminus L^{\frac{p}{n} + \varepsilon}_{\loc}(\Omega),\forall \varepsilon > 0$;\label{>n}
\end{enumerate}
\end{Teo}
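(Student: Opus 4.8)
The plan is to prove the two "positive" (upper bound) statements first, and then obtain the "negative" (sharpness) statements by invoking Lemma \ref{constr}, exactly as in the proof of Theorem \ref{Baire}. For the positive part of (i), i.e. $p \in [1,n)$: given a convex $\varphi \in W^{2,p}(\Omega)$, one wants $\det(H\varphi) \in L^1_{\loc}$. The cleanest route is to compare the pointwise determinant with the absolutely continuous part of the Monge–Ampère measure $\mu_\varphi$. For a convex function, by the area formula / change of variables for the subdifferential, the density of the absolutely continuous part of $\mu_\varphi$ equals $\det(H\varphi)$ a.e. (this is standard, see \cite[Section 2]{FIG}); since $\mu_\varphi$ is a locally finite measure, its absolutely continuous part has locally integrable density, giving $\det(H\varphi) \in L^1_{\loc}(\Omega)$. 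Alternatively, since $\varphi \in W^{2,1}_{\loc}$ one can bound $|\det(H\varphi)| \le c_n |H\varphi|^n$, but that only gives $L^{p/n}_{\loc}$ which is worse than $L^1$ when $p<n$; so the Monge–Ampère comparison is the right tool here. For the positive part of (ii), i.e. $p \in (n,+\infty)$: here $H\varphi \in L^p_{\loc}$ and the elementary bound $|\det(H\varphi)| \le c_n |H\varphi|^n \in L^{p/n}_{\loc}(\Omega)$ does the job immediately, no convexity needed.

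For the sharpness statements I would reuse the family $\{\varphi_{\beta,\delta,\varepsilon,x_0}\}$ of Lemma \ref{constr}, applied now directly (rather than to its cofactor). Recall from Step 2–3 of the proof of Lemma \ref{constr} that $f_\alpha \in W^{2,q}_{\loc}$ precisely for $\alpha \ge 0$ when $q<n$, and that $\det(Hf_\alpha)(x) = \alpha(1+\alpha)^n\|x\|^{n(\alpha-1)}$ on $B_1(0)$, so that $\det(Hf_\alpha)^{1} \in L^{s}_{\loc}$ iff $s(1-\alpha)n < n$, i.e. iff $s < \frac{1}{1-\alpha}$ (for $\alpha<1$), and $\det(Hf_\alpha) \notin L^{1/(1-\alpha)}(B_r(0))$ for every $r>0$.

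\textbf{Case (i), $p \in [1,n)$.} Since we only need $\varphi \in W^{2,p}_{\loc}$ with $p<n$, we may take any $\alpha \in [0,1)$; we want $\det(H\varphi) \in L^1_{\loc} \setminus \mathcal{H}^1_{\loc}$. By the characterization recalled in the Notation subsection, for a non-negative $g$ one has $g \in \mathcal{H}^1_{\loc}(\Omega)$ iff $g\log(1+g) \in L^1_{\loc}(\Omega)$. So one picks $\alpha$ with $\frac{1}{1-\alpha} = 1$, i.e. formally $\alpha = 0$: indeed for $\alpha = 0$ one has $\det(Hf_0)(x) = 0$ on $B_1$, which is degenerate; the fix is to choose a logarithmically-tuned profile. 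Concretely, replace $\|x\|^{1+\alpha}$ by a radial convex function $\psi$ whose Hessian determinant behaves like $\|x\|^{-n}(\log\frac{1}{\|x\|})^{-\beta}$ near $0$ for a suitable $\beta>1$ — this is $L^1_{\loc}$ but $g\log(1+g) \sim \|x\|^{-n}(\log\frac 1{\|x\|})^{1-\beta}$ fails $L^1_{\loc}$ when $\beta \le 2$; choosing $1<\beta\le 2$ gives the desired example, and one checks $\psi \in W^{2,p}_{\loc}$ for all $p<n$ since $|H\psi| \sim \|x\|^{-1}(\log\frac1{\|x\|})^{-\beta/n}$ is in every $L^p_{\loc}$, $p<n$. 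Then localize and glue to a quadratic outside a ball exactly as in \eqref{param} to land in $H_{p,K}$ (or just in $W^{2,p}(\Omega)$ convex), and use that $\det(H\bar\varphi) = \det(H\psi) + (\text{lower order})$, or more safely that $\det(H\bar\varphi) \ge 0$ with the singular behavior preserved on a small ball by the same Minkowski determinant inequality argument as in the proof of Theorem \ref{Baire}.

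\textbf{Case (ii), $p \in (n,+\infty)$.} Now we need $\varphi \in W^{2,p}_{\loc}$ with $p>n$, so by Step 3 of Lemma \ref{constr} we must take $\alpha > \frac{p-n}{p}$, and we want $\det(H\bar\varphi) \in L^{p/n}_{\loc}\setminus L^{p/n+\varepsilon}_{\loc}$ for all $\varepsilon>0$. From $\det(Hf_\alpha) \sim \|x\|^{n(\alpha-1)}$, $\det(Hf_\alpha)^{p/n} \sim \|x\|^{p(\alpha-1)}$ is $L^{1}_{\loc}$ iff $p(\alpha-1) > -n$ iff $\alpha > \frac{p-n}{p}$ — exactly the Sobolev threshold — so at the endpoint the determinant is borderline. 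Choose $\alpha$ with $\frac1{1-\alpha}$ slightly below $\frac{p(?)}{}$... more precisely pick, given $\varepsilon>0$, an $\alpha = \alpha(\varepsilon) \in (\frac{p-n}{p},1)$ with $(p/n+\varepsilon)\cdot n(1-\alpha) \ge n$, i.e. $\alpha \le 1 - \frac{1}{p/n+\varepsilon}$, while still $\alpha > \frac{p-n}p = 1 - \frac np$; since $\frac1{p/n} = \frac np$, such $\alpha$ exists for every $\varepsilon>0$. This makes $\det(Hf_\alpha) \notin L^{p/n+\varepsilon}(B_r(0))$ while $f_\alpha \in W^{2,p}_{\loc}$. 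As a single function cannot work for all $\varepsilon$ simultaneously (just as in Theorem \ref{Baire} one needs a Baire argument), the honest statement of (ii) should be proved by the same residuality scheme: set up $H_{p,K}$ with its complete metric, write the "bad" set as a countable union of Fatou-closed sets $C_{k,j} = \{\varphi : \|\det(H\varphi)^{1}\|_{L^{p/n + 1/k}(\Omega)} \le j\}$ with empty interior (perturbing a putative interior point $\bar\varphi$ by $\varphi_{\beta,\delta,1/k,x_0}$ from Lemma \ref{constr} and using $\det(H(\bar\varphi + \varphi_{\ped})) \ge \det(H\varphi_{\ped}) \ge 0$ via Minkowski), and conclude residuality — this gives a (topologically typical) $\bar\varphi$ with $\det(H\bar\varphi) \notin L^{p/n+\varepsilon}_{\loc}$ for all $\varepsilon>0$.

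The main obstacle is the $\mathcal{H}^1$ borderline construction in case (i): getting a convex profile whose Hessian determinant sits exactly in $L^1_{\loc}\setminus \mathcal{H}^1_{\loc}$ requires the logarithmic fine-tuning sketched above (power profiles alone are too crude, since $\|x\|^{n(\alpha-1)} \in L^1_{\loc}$ forces $\alpha>0$ and then automatically $\in \mathcal{H}^1_{\loc}$), together with a careful verification that the logarithmic correction keeps $H\varphi \in L^p_{\loc}$ for every $p<n$ and does not destroy convexity; the gluing to a quadratic at the boundary of a small ball and the propagation of the singularity under the additive perturbation $\bar\varphi + \varphi_{\ped}$ are routine by the Minkowski determinant inequality, exactly as in the proof of Theorem \ref{Baire}.
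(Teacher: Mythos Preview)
Your positive parts for both (i) and (ii) match the paper exactly (Monge--Amp\`ere density for (i), H\"older for (ii)), and your treatment of the sharpness in (ii) is also the paper's: pick $\alpha=\alpha(\varepsilon)$ just above $\tfrac{p-n}{p}$ and run the Baire scheme on $H_{p,K}$ with the closed sets $C_{k,j}$ and the Minkowski determinant inequality. So on three of the four claims you and the paper coincide.

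Where you diverge is the sharpness of (i). You propose an \emph{explicit} radial convex profile with $\det(H\psi)\sim \|x\|^{-n}(\log\tfrac{1}{\|x\|})^{-\beta}$, $1<\beta\le 2$, which indeed lies in $L^1_{\loc}\setminus\mathcal{H}^1_{\loc}$; this is a sound plan (your log--exponent on $|H\psi|$ is slightly off, but since $r^{-1}\in L^p_{\loc}$ for all $p<n$ the conclusion $H\psi\in L^p_{\loc}$ survives). The paper takes a completely different and rather slicker route: it uses precisely the ``degenerate'' endpoint you discarded, $f_0(x)=\|x\|+\text{const}$, whose \emph{pointwise} Hessian determinant vanishes but whose Monge--Amp\`ere measure has a Dirac mass at the origin. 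Lemma~\ref{MA} then says that any sequence of smooth convex functions converging locally uniformly to (a rescaling of) $f_0$ must have $\|\det(H\varphi_j)\|_{\mathcal{H}^1}\to+\infty$, because a uniform $\mathcal{H}^1$ bound would force weak $L^1$ compactness of the determinants and hence an absolutely continuous limit Monge--Amp\`ere measure. Feeding mollifications of $\varphi_\delta$ into the Baire machine (with $C_m=\{\|\det(H\varphi)\|_{\mathcal{H}^1}\le m\}$) then yields residuality of $U_{p,K}$. The trade--off: your approach, once the logarithmic profile is written down and checked, gives a concrete single counterexample without any category argument; the paper's approach avoids the delicate borderline construction entirely, needs no log--tuning, and as a bonus gives genericity --- but it relies on the extra Lemma~\ref{MA} and the $\mathcal{H}^1$/weak--$L^1$ compactness fact.
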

\begin{proof}
First we deal with the "positive" part of the statement of the Theorem. We start by noticing that, for every convex function $\varphi$, $\det(H\varphi)$ is the density of the absolutely continuous part of the Monge-Amp\`ere measure $\mu_\varphi$ associated to $\varphi$ (see for instance \cite[Lemma 1.18]{GUIPHD}). By the Radon-Nikodym Theorem we have
$$
\det(H\varphi) \in L^1_{\loc}(\Omega) \qquad \forall \, \varphi \,\text{ \,convex}
$$
If we take a convex function  $\varphi \in W^{2,p}_{\loc}(\Omega)$ for some $p > n$, by H\"older inequality we trivially have $\det(H\varphi) \in L^{\frac{p}{n}}_{\loc}(\Omega) $. Let us now show the optimality of these results. The optimality for the case $\eqref{<n}$ is the content of Proposition \ref{last?}. To find a convex function $\bar\varphi \in W_{\loc}^{2,p}(\Omega)$, $p > n$, such that $\det(H\bar{\varphi}) \in L_{\loc}^{\frac{p}{n}}(\Omega)\setminus L_{\loc}^{\frac{p}{n} + \varepsilon}(\Omega)$ for every $\varepsilon > 0$, consider again the family of functions $f_\alpha$ defined in \eqref{func}. As proved in Step 2 of Lemma \ref{constr} and Lemma \ref{Sobolev}, we find that if $\alpha >\frac{p - n}{p}$, then $f_\alpha \in W_{\loc}^{2,p}(\R^n)$ and for every $\varepsilon > 0$, we find $\alpha = \alpha(\varepsilon) > 0$ such that $f_\alpha \in W_{\loc}^{2,p}(\R^n)$ but
\[
\det(Hf_\alpha) \notin L^{\frac{p}{n} + \varepsilon}(B_r(0)), \text{ for any } r>0.
\]
With a construction analogous to the one of Lemma \ref{constr} and the same proof as in Theorem \ref{Baire}, it is possible to prove that the set $$\{\varphi \in H_{p,K}: \det(H\varphi) \in L^{\frac{p}{n}}(\Omega)\setminus L^{\frac{p}{n} + \varepsilon}(\Omega), \forall \varepsilon > 0\}$$ is residual in $H_{p,K}$. By Baire's theorem, we then deduce the existence of such a function $\bar\varphi$.
\end{proof}

We will now prove the optimality of \eqref{<n} of Theorem \ref{summ}, namely

\begin{prop}\label{last?}
Let $p \in [1,n)$. The set $$U_{p,K}\doteq\{\varphi \in H_{p,K}: \det(H\varphi) \in L^{1}(\Omega)\setminus \mathcal{H}^1(\Omega)\}$$ is residual in $H_{p,K}$.
\end{prop}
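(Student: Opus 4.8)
The plan is to mimic the Baire-category argument of Theorem~\ref{Baire}, replacing the Lebesgue scale $L^{\frac{1}{1-p^*}+\varepsilon}$ by the local Hardy space $\mathcal{H}^1$, and replacing Lemma~\ref{constr} by a variant adapted to the Hardy endpoint. Concretely, I would first write
\[
U_{p,K}^c = \{\varphi \in H_{p,K}: \det(H\varphi) \in \mathcal{H}^1(\Omega)\};
\]
note that $\det(H\varphi)\in L^1_{\loc}(\Omega)$ automatically for convex $\varphi$, as recalled in the proof of Theorem~\ref{summ}, so $U_{p,K}^c$ is exactly the set where the determinant gains Hardy integrability. Using the characterization $\|f\|_{\mathcal{H}^1(\Omega)} = \int_\Omega f\log(1+f)\,\dx$ for non-negative $f$ recalled in the preliminaries, I would decompose
\[
U_{p,K}^c = \bigcup_{j\in\N} C_j, \qquad C_j \doteq \Big\{\varphi \in H_{p,K}: \int_\Omega \det(H\varphi)\log(1+\det(H\varphi))\,\dx \le j\Big\}.
\]
Each $C_j$ is closed in $(H_{p,K},d)$: if $\varphi_m\to\varphi$ in $W^{2,p}(\Omega)$ then $H\varphi_m\to H\varphi$ in $L^p$, hence (up to a subsequence) a.e., so $\det(H\varphi_m)\to\det(H\varphi)$ a.e., and since $t\mapsto t\log(1+t)$ is continuous, non-negative and increasing, Fatou's lemma gives $\int \det(H\varphi)\log(1+\det(H\varphi)) \le \liminf_m \int \det(H\varphi_m)\log(1+\det(H\varphi_m)) \le j$.

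It remains to show each $C_j$ has empty interior in $H_{p,K}$. Suppose not: then there are $\bar\varphi\in C_j$ and $\rho>0$ with the $d$-ball $\mathcal{N}_\rho(\bar\varphi)\subset C_j$. I would perturb $\bar\varphi$ by a small convex bump whose Hessian determinant fails to lie in $\mathcal{H}^1$. The key point is that adding a convex function to a convex function keeps it convex, and by the Minkowski determinant inequality $\det(H(\bar\varphi+\psi))^{1/n} \ge \det(H\bar\varphi)^{1/n} + \det(H\psi)^{1/n} \ge \det(H\psi)^{1/n}$ pointwise a.e. Thus $\det(H(\bar\varphi+\psi)) \ge \det(H\psi)$, and since $t\mapsto t\log(1+t)$ is increasing, if $\det(H\psi)\notin\mathcal{H}^1(B_r(x_0))$ for balls around some $x_0\in\intt(K)$, then $\bar\varphi+\psi\notin C_j$, contradicting $\mathcal{N}_\rho(\bar\varphi)\subset C_j$ provided $\psi$ is small enough in the $W^{2,p}$ distance and supported in $K$. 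So the whole argument reduces to producing, for every $\beta,\delta>0$ and $x_0\in\Omega$, a convex $\psi\in W^{2,p}_{\loc}(\Omega)$ that equals a fixed quadratic outside $B_\beta(x_0)$, with $\|\psi - \text{quadratic}\|_{W^{2,p}(\Omega)} \le \delta$, and with $\det(H\psi) \notin \mathcal{H}^1(B_r(x_0))$ for every $r>0$ — i.e. a Hardy-endpoint analogue of Lemma~\ref{constr}.

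For that construction I would again use the radial family $f_\alpha$ of \eqref{func}, now taking $\alpha \to 0^+$ (the borderline value, which is allowed since $p<n$ so $f_0\in W^{2,p}_{\loc}$ by Lemma~\ref{Sobolev}). By \eqref{est2}, near the origin $\det(Hf_\alpha)(x) = \alpha(1+\alpha)^n\|x\|^{n(\alpha-1)}$; for $\alpha = 0$ this degenerates since the prefactor vanishes, but one expects that the right borderline object has $\det(Hf)\sim \|x\|^{-n}$ up to a logarithmic factor — precisely the growth for which $\int f\log(1+f)$ diverges while $\int f$ converges. So the candidate is either $f_0$ itself (checking that although $\alpha=0$ formally kills the determinant, the correct computation of $\mu_{f_0}$ — which for $\|x\|\le 1$ is $f_0(x)=\|x\|$, whose gradient is the constant-modulus map $x/\|x\|$ onto the unit sphere — gives a singular Monge–Ampère contribution and hence $\det(Hf_0)=0$ a.e., which is \emph{not} what we want), or a logarithmic modification such as $g(x) \doteq \|x\|\big(\log(e/\|x\|)\big)^{-\gamma}$ or $h(x) \doteq \|x\|^{1+\alpha}$ with $\alpha=\alpha(r)\to 0$ chosen so that on each dyadic annulus the $\mathcal H^1$-norm picks up an unbounded contribution. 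The cleanest route, and the one I would pursue, is to build $\psi$ as a countable superposition of rescaled copies of $f_{\alpha_k}$ with $\alpha_k\downarrow 0$, placed on disjoint shrinking balls $B_{\beta_k}(x_0+v_k)$ accumulating at $x_0$: on the $k$-th ball $\det(H\psi) \gtrsim \alpha_k \beta_k^{-n}\|x - x_0 - v_k\|^{n(\alpha_k-1)}$ after scaling, and one computes $\int_{B_{\beta_k}} \det(H\psi)\log(1+\det(H\psi)) \gtrsim c$ independently of $k$ (tuning $\beta_k$ and the amplitudes so that the $W^{2,p}$ norms sum to $\le\delta$ while each Hardy contribution stays bounded below), whence $\det(H\psi)\notin\mathcal H^1(B_r(x_0))$ for every $r>0$.

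The main obstacle is exactly this last construction: unlike in Lemma~\ref{constr}, a single scaled profile $f_\alpha$ with $\alpha>0$ \emph{does} have $\det(Hf_\alpha)\in\mathcal H^1_{\loc}$ (since $\|x\|^{n(\alpha-1)}\log(1+\|x\|^{n(\alpha-1)})$ is locally integrable for $\alpha>0$), and the borderline $\alpha=0$ profile has vanishing a.c. Hessian determinant; so one genuinely needs the superposition/logarithmic trick to sit exactly on the endpoint, and the delicate part is balancing the parameters $(\alpha_k,\beta_k,v_k,\text{amplitudes})$ so that convexity of the sum is preserved (disjoint supports make this automatic), the total $W^{2,p}$-perturbation is $\le\delta$, and yet $\int \det(H\psi)\log(1+\det(H\psi)) = +\infty$ on every neighborhood of $x_0$. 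Once this Hardy-analogue of Lemma~\ref{constr} is in place, the Baire argument is verbatim that of Theorem~\ref{Baire}.
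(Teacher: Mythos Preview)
Your Baire framework is exactly the paper's: the decomposition $U_{p,K}^c=\bigcup_j C_j$ with $C_j=\{\varphi:\int\det(H\varphi)\log(1+\det(H\varphi))\le j\}$, closedness via Fatou, and the use of Minkowski's determinant inequality together with the monotonicity of $t\mapsto t\log(1+t)$ to pass the blow-up from the perturbation to $\bar\varphi+\psi$ --- all of this matches the paper line by line.

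The difference is in how one manufactures the perturbation. You look for a \emph{single} convex $\psi\in W^{2,p}$ with $\det(H\psi)\notin\mathcal H^1$ near $x_0$, and, having correctly ruled out both $f_0$ (pointwise determinant zero) and any single $f_\alpha$ with $\alpha>0$ (determinant in $\mathcal H^1_{\loc}$), you propose a countable superposition of rescaled $f_{\alpha_k}$'s with $\alpha_k\downarrow 0$ on disjoint shrinking balls. This is plausible, but the balancing of parameters (summable $W^{2,p}$ contributions, uniformly positive Hardy contributions, and a well-defined limiting quadratic outside $K$) is left entirely unchecked, and is genuinely delicate.

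The paper sidesteps this construction with a cleaner idea you did not consider: it does use $f_0$, but instead of looking at $\det(Hf_0)$ pointwise it exploits that the Monge--Amp\`ere measure $\mu_{f_0}$ has a nontrivial singular part ($\mu_{f_0}\llcorner B_1=\omega_n\delta_0$). The key auxiliary result (Lemma~\ref{MA}) says: if $\mu_\varphi$ has singular part at $z_0$, then for \emph{any} sequence of smooth convex $\varphi_j\to\varphi$ locally uniformly one has $\|\det(H\varphi_j)\|_{\mathcal H^1(B_r(z_0))}\to\infty$; the proof is a short compactness argument (equiboundedness in $\mathcal H^1$ would force weak $L^1$ compactness of $\det(H\varphi_j)$, hence an absolutely continuous limit Monge--Amp\`ere measure, a contradiction). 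So the paper takes the rescaled $f_0$-bump $\varphi_\delta$, mollifies to get smooth $\varphi_{\delta,j}\in H_{p,K}$ with $\|\varphi_{\delta,j}\|_{W^{2,p}}\le\rho/2$, and obtains a \emph{sequence} of perturbations $\bar\varphi+\varphi_{\delta,j}\in\mathcal N_\rho(\bar\varphi)$ whose Hardy norms blow up --- contradicting $\mathcal N_\rho(\bar\varphi)\subset C_m$ for $j$ large. No superposition, no parameter tuning: the singular Monge--Amp\`ere mass does the work. Your route may ultimately succeed, but the paper's is both shorter and conceptually cleaner, and the lemma it rests on is the idea you were missing.
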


To prove Proposition \ref{last?}, we first need the following result, for which we thank Guido De Philippis.

\begin{lemma}\label{MA}
Let $\varphi: \Omega \to \R$ be a convex function such that its Monge-Amp\`ere measure $\mu_\varphi$ has a non-trivial singular part with respect to the Lebesgue measure. Then, for every sequence of smooth and convex functions $\varphi_j$ converging locally uniformly to $\varphi$ and for every $z_0 \in \spt(\mu_\varphi^s)$, we have
\[
\|\det(H\varphi_j)\|_{\mathcal{H}^1(B_r(z_0))} \to + \infty, \text{ as $j\to \infty$},
\]
for every $B_r(z_0)$ compactly contained in $\Omega$.
\end{lemma}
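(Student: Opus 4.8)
The plan is to argue by contradiction, combining the de la Vall\'{e}e--Poussin equi-integrability criterion with the weak-$*$ continuity of the Monge--Amp\`ere measure under local uniform convergence of convex functions. Fix a ball $B_r(z_0)$ compactly contained in $\Omega$ and suppose, towards a contradiction, that $\|\det(H\varphi_j)\|_{\mathcal{H}^1(B_r(z_0))}$ does not tend to $+\infty$. Then along a subsequence (not relabeled) we have
\[
\sup_j \int_{B_r(z_0)} \det(H\varphi_j)\,\log\bigl(1 + \det(H\varphi_j)\bigr)\,\dx \;=\; \sup_j \|\det(H\varphi_j)\|_{\mathcal{H}^1(B_r(z_0))} \;=:\; C < +\infty,
\]
where we used the characterization of the $\mathcal{H}^1$-norm of non-negative functions recalled in the introduction, which applies since $\varphi_j$ is smooth and convex, hence $\det(H\varphi_j) \ge 0$. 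Since $t \le \tfrac{1}{\log 2}\, t\log(1+t)$ for $t \ge 1$, this bound also yields $\sup_j \|\det(H\varphi_j)\|_{L^1(B_r(z_0))} < +\infty$.

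Next I would invoke the de la Vall\'{e}e--Poussin criterion: as $\Phi(t) \doteq t\log(1+t)$ satisfies $\Phi(t)/t \to +\infty$ as $t \to +\infty$, the uniform bound above shows that $\{\det(H\varphi_j)\}_j$ is equi-integrable on $B_r(z_0)$. Being also bounded in $L^1(B_r(z_0))$, by the Dunford--Pettis theorem a further subsequence satisfies $\det(H\varphi_j) \rightharpoonup g$ weakly in $L^1(B_r(z_0))$ for some $g \in L^1(B_r(z_0))$, $g \ge 0$. On the other hand, since each $\varphi_j$ is smooth and convex, its Monge--Amp\`ere measure is $\mu_{\varphi_j} = \det(H\varphi_j)\,\mathcal{L}^n$, and since $\varphi_j \to \varphi$ locally uniformly with $\varphi_j, \varphi$ convex, one has $\mu_{\varphi_j} \weak \mu_\varphi$ as Radon measures on $\Omega$ (a classical property of $\mu_\varphi$, see \cite[Section 2]{FIG}). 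Testing both convergences against an arbitrary $\eta \in C_c(B_r(z_0))$ --- which, extended by zero, is an admissible test function for the weak-$*$ convergence of measures and lies in $L^\infty(B_r(z_0))$ --- gives $\int_{B_r(z_0)} \eta\, g \,\dx = \int_{B_r(z_0)} \eta \, d\mu_\varphi$ for every such $\eta$, hence $\mu_\varphi \llcorner B_r(z_0) = g\,\mathcal{L}^n \llcorner B_r(z_0)$.

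In particular $\mu_\varphi \llcorner B_r(z_0)$ is absolutely continuous with respect to the Lebesgue measure, so (since restriction commutes with the Lebesgue decomposition) $\mu_\varphi^s(B_r(z_0)) = 0$. But $B_r(z_0)$ is an open neighborhood of $z_0 \in \spt(\mu_\varphi^s)$, hence $\mu_\varphi^s(B_r(z_0)) > 0$ --- a contradiction. This proves $\|\det(H\varphi_j)\|_{\mathcal{H}^1(B_r(z_0))} \to +\infty$, as claimed. The only non-routine ingredient is the weak-$*$ continuity of the Monge--Amp\`ere operator under local uniform convergence of convex functions, which is standard; the rest is the usual equi-integrability dichotomy, so I do not anticipate any genuine obstacle beyond assembling these facts carefully. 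The one point that needs a little attention is that one should not pass to the restriction $\det(H\varphi_j)\,\mathcal{L}^n \llcorner B_r(z_0)$ directly (which would require $\mu_\varphi(\partial B_r(z_0)) = 0$); testing only against functions in $C_c(B_r(z_0))$ sidesteps this issue entirely.
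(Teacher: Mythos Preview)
Your proof is correct and follows essentially the same approach as the paper: argue by contradiction, use the uniform $\mathcal{H}^1$ bound to extract a weakly $L^1$-convergent subsequence, and combine this with the weak-$*$ continuity of the Monge--Amp\`ere measure under local uniform convergence to conclude that $\mu_\varphi\llcorner B_r(z_0)$ is absolutely continuous, contradicting $z_0\in\spt(\mu_\varphi^s)$. The only cosmetic difference is that you spell out the de la Vall\'ee--Poussin/Dunford--Pettis step explicitly, whereas the paper simply cites a result from \cite{MULDET} for weak $L^1$ compactness.
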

\begin{proof}
By contradiction, suppose there exists a sequence $(\varphi_j)_j$, a point $z_0 \in \spt(\mu^s_{\varphi})$ and $r > 0$ as in the statement such that
\[
\sup_j\|\det(H\varphi_j)\|_{\mathcal{H}^1(B_r(z_0))} < + \infty.
\]
Equi-boundedness in $\mathcal{H}^1$ tells us that we can also assume, up to a non-relabeled subsequence, that $\det(H\varphi_j)$ converges weakly in $L^1$ to a function $F \in L^1(B_r(z_0))$ (see \cite[Theorem 4.1]{MULDET} for a proof). By definition of the Monge-Amp\`ere measure and the regularity of $\varphi_j$, we can write, $\forall f \in C^0_c(B_r(z_0))$,
\[
\int_{\Omega}f(x)d\mu_{\varphi_j}(x) = \int_{\Omega}f(x)\det(H\varphi_j)(x)\dx.
\]
Now, the uniform convergence $\varphi_j \to \varphi$ implies that $\mu_{\varphi_j} \weak \mu_\varphi$ (see \cite[Proposition 2.9]{FIG}), and the weak convergence of $\det(H\varphi_j)$ to $F$ combined with the previous equality implies that, in the limit,
\[
\int_{\Omega}f(x)d\mu_\varphi(x) = \int_\Omega F(x)f(x)\dx\,,\quad  \forall f \in C^0_c(B_r(z_0)).
\]
The last equality implies $\mu_\varphi\llcorner B_r(z_0) = F\chi_{B_r(z_0)}\dx $, contradicting the fact that $z_0 \in \spt(\mu^s_\varphi)$.
\end{proof}

\begin{proof}[Proof of Proposition \ref{last?}]
Fix $p \in [1,n)$. We consider the function $f_0$ constructed in the Steps 1 and 2 of Lemma \ref{constr}. By Lemma \ref{Sobolev}, $f_0\in W_{\loc}^{2,p}(\R^n)$  Analogously to Step 4 of the same lemma, for every $\beta,\delta,\varepsilon > 0$ and $x_0 \in \intt(K)$, we consider $\varphi_{\beta,\delta,\varepsilon,x_0}$ defined as in \eqref{param}. We choose $x_0$ arbitrarily and $\beta$ such that $B_{2\beta}(x_0) \subset K$. For this proof we will not need $\varepsilon$, that we consider fixed. Therefore, we will write $\varphi_\delta$ instead of $\varphi_{\beta,\delta,\varepsilon,x_0}$ for the sake of readability. To prove the Proposition, we write $U_{p,K}^c$ as the countable union of closed sets:
\[
U_{p,K}^c \doteq \bigcup_{m \in \N}\left\{\varphi \in H_{p,K}: \|\det(H\varphi)\|_{\mathcal{H}^1(\Omega)} \le m\right\}.
\]
Each set $C_m \doteq \left\{\varphi \in H_{p,K}: \|\det(H\varphi)\|_{\mathcal{H}^1(\Omega)} \le m\right\}$ is closed. To prove that it has empty interior we reason by contradiction. Therefore, we find $m$, $\rho > 0$ and $\bar\varphi$ such that the ball $\mathcal{N}_\rho(\bar\varphi) \subset C_m$. Now choose $\delta > 0$ in such a way that $\|\varphi_{\delta}\|_{W^{2,p}(\Omega)} \le \frac{\rho}{2}$. This can be done in view of \eqref{est1} (in the case $\alpha=0$). If we now mollify $\varphi_\delta$, we get a sequence of smooth convex functions $\varphi_{\delta,j} \in H_{p,K}$  such that $\|\varphi_{\delta,j}\|_{W^{2,p}(\Omega)}\le \frac{\rho}{2}, \forall j \in \mathbb{N}$. This sequence is also converging locally uniformly to $\varphi_{\delta}$, since it is well known that real-valued convex functions are locally Lipschitz. By the definition of $\varphi_\delta$ in \eqref{param} and the fact (see \cite[Example 2.2(2)]{FIG}) that
\[
\mu_{f_0}\llcorner B_1(0) = \omega_n\delta_0,
\]
we find that $x_0 \in \spt(\mu^s_{\varphi_{\delta}})$ and, by Lemma \ref{MA}, that
\begin{equation}\label{expl2}
\|\det(H\varphi_{\delta,j})\|_{\mathcal{H}^1(\Omega)} \to +\infty, \text{ as } j \to +\infty.
\end{equation}
Now, by our choice of $\delta$, for every $j \in \mathbb{N}$, we have that
\[
\bar\varphi + \varphi_{\delta,j} \in \mathcal{N}_\rho(\varphi),
\]
hence
\begin{equation}\label{contr2}
\|\det(H\bar\varphi + H\varphi_{\delta,j})\|_{\mathcal{H}^1(\Omega)} = \int_{\Omega}\det(H\bar\varphi + H\varphi_{\delta,j})\log(1 + \det(H\bar\varphi + H\varphi_{\delta,j})) \le m, \forall j \in \mathbb{N}.
\end{equation}
By the monotonicity of the determinant on the cone of non-negative definite symmetric matrices, we have
\[
\det(H\varphi_{\delta,j}) \le \det(H\bar\varphi + H\varphi_{\delta,j}), 
\]
and since the function $x \mapsto x\log(1 + x)$ is increasing for $x \ge 0$, then
\[
\|\det(H\varphi_{\delta,j})\|_{\mathcal{H}^1(\Omega)} \le \|\det(H\bar\varphi + H\varphi_{\delta,j})\|_{\mathcal{H}^1(\Omega)} \overset{\eqref{contr2}}{\le} m, \forall j \in \mathbb{N}.
\]
The last inequality is in contradiction with \eqref{expl2}.
\end{proof}

\bibliographystyle{plain}
\bibliography{DeterminantsDRT}

\begin{thebibliography}{10}

\bibitem{DIM}
{Adolfo Arroyo-Rabasa, Guido De Philippis, Jonas Hirsch and Filip Rindler}.
\newblock {Dimensional estimates and rectifiability for measures satisfying
  linear PDE constraints}.
\newblock {\em to appear in Geom. Funct. Anal.}, 2018.

\bibitem{BRE}
Haim Brezis.
\newblock {\em {F}unctional {A}nalysis, {S}obolev {S}paces and {P}artial
  {D}ifferential {E}quations}.
\newblock Springer, 2010.

\bibitem{EVAPDE}
Lawrence~C. Evans.
\newblock {\em {Partial Differential Equations}}, volume~19.
\newblock American Mathematical Soc., 1998.

\bibitem{EVG}
Lawrence~C. Evans and Ronald~F. Gariepy.
\newblock {\em Measure theory and fine properties of functions}.
\newblock Chapman , Hall/CRC, 2015.

\bibitem{FIG}
Alessio Figalli.
\newblock The {M}onge-{A}mpère {E}quation and {I}ts {A}pplications.
\newblock {Z}urich {L}ectures in {A}dvanced {M}athematics, 2017.

\bibitem{ALB}
{Giovanni Alberti}.
\newblock Rank one property for derivatives of functions with bounded
  variation.
\newblock {\em Proceedings of the Royal Society of Edinburgh: Section A
  Mathematics}, 123:239--274, 1993.

\bibitem{KIRK}
Bernd Kirchheim.
\newblock {R}igidity and {G}eometry of {M}icrostructures, 2003.

\bibitem{MULDETPOS}
Stefan Müller.
\newblock A surprising higher integrability property of mappings with positive
  determinant.
\newblock {\em Bulletin (New Series) of the Americam Mathematical Society},
  21(2):245--249, 1989.

\bibitem{MULDET}
Stefan Müller.
\newblock {Higher integrability of determinants and weak convergence in $L^1$}.
\newblock {\em Journal für die reine und angewandte Mathematik}, 1990(412),
  1990.

\bibitem{MUR1}
François Murat.
\newblock Compacité par compensation.
\newblock {\em Annali della Scuola Normale Superiore di Pisa - Classe di
  Scienze}, 5(3):489--507, 1978.

\bibitem{MUR2}
François Murat.
\newblock {Compacité par compensation, II}.
\newblock {\em Proceedings of the International Meeting on Recent Methods in
  Nonlinear Analysis}, pages 245--256, 1979.

\bibitem{GUIPHD}
Guido~De Philippis.
\newblock {\em Regularity of optimal transport maps and applications}.
\newblock PhD thesis, Scuola Normale Superiore, 2012.

\bibitem{GUIANN}
Guido~De Philippis and Filip Rindler.
\newblock {On the structure of $\mathscr{A}$-free measures and applications}.
\newblock {\em Annals of Mathematics}, 184(3):1017--1039, nov 2016.

\bibitem{DDG}
Guido~De Philippis, Antonio~De Rosa, and Francesco Ghiraldin.
\newblock Rectifiability of varifolds with locally bounded first variation with
  respect to anisotropic surface energies.
\newblock {\em Communications on Pure and Applied Mathematics},
  71(6):1123--1148, August 2017.

\bibitem{DET}
{Ronald Coifman, Pierre-Louis Lions, Yves Meyer, Stephen Semmes}.
\newblock {Compensated compactness and Hardy spaces}.
\newblock {\em Journal de Mathématiques Pures et Appliquées, Neuvième
  Série}, 01 1993.

\bibitem{SER}
Denis Serre.
\newblock Divergence-free positive symmetric tensors and fluid dynamics.
\newblock {\em Annales de l'Institut Henri Poincar{\'{e}} C, Analyse non
  lin{\'e}aire}, 35(5):1209--1234, aug 2018.

\bibitem{STE}
Elias Stein.
\newblock {Note on the class LlogL}.
\newblock {\em Studia Mathematica}, 32(3):305--310, 1969.

\bibitem{TAR1}
Luc Tartar.
\newblock Compensated compactness and applications to partial differential
  equations.
\newblock {\em {Nonlinear analysis and mechanics: Heriot-Watt Symposium}},
  IV:136--212, 01 1979.

\bibitem{TAR2}
Luc Tartar.
\newblock The compensated compactness method applied to systems of conservation
  laws.
\newblock {\em Systems of Non-linear PDE, NATO AS1 Series}, 111, 01 1983.

\end{thebibliography}

\end{document}